\newtheorem{theorem}{Theorem}[section]
\newtheorem{definition}[theorem]{Definition}
\newtheorem{proposition}[theorem]{Proposition}
\title{\bf Operations on Covering Numbers of Certain Graph Classes}
\author{{\bf Susanth C\footnote{Department of Mathematics, Research \& Development Centre, Bharathiar University, Coimbatore - 641046, Tamilnadu, email: {\em susanth\_c@yahoo.com}}}   ~and
{\bf Sunny Joseph Kalayathankal\footnote{Department of Mathematics, Kuriakose Elias College, Mannanam, Kottayam - 686561, Kerala, email:{\em sunnyjoseph2014@yahoo.com}}}}
\date{}
\begin{document}
\maketitle
\begin{abstract}
The bounds on the sum and product of chromatic numbers of a graph and its complement are known as Nordhaus-Gaddum inequalities. 
In this paper, we study the operations on the Independence numbers of graphs with their complement.  We also provide a new characterization of certain graph classes.
\end{abstract}
{\bf Keywords:} Independence number, matching number, line graph.
\\
\noindent \textbf{Mathematics Subject Classification 2010: 05C15, 05C69, 05C70}
\section{Introduction}
For all  terms and definitions, not defined specifically in this paper, we refer to \cite{FH}. Unless mentioned otherwise, all graphs considered here are simple, finite and have no isolated vertices.
\\Many problems in extremal graph theory seek the extreme values of graph parameters
on families of graphs. The classic paper of Nordhaus and Gaddum \cite{KCA} study the extreme values of the sum (or product) of a parameter on a graph and its complement, following  solving these problems for the chromatic number on n-vertex graphs. In this paper, we study such problems for some graphs and their associated graphs.
\begin{definition}\rm{
\cite{CH} A \textit{Walk}, $W=v_0e_1v_1e_2v_2 \ldots v_{k-1} e_kv_k$, in a graph $G$ is a finite sequence whose terms are alternately vertices and edges such that, for $1\leq i\leq k$, the edge $e_i$ has ends $v_{i-1}$and $v_i$.}
\end{definition}
\begin{definition}\rm{
\cite{CH} If the vertices $v_0,v_1,\ldots,v_k$ of a walk $W$ are distinct then $W$ is called a \textit{Path}.  A path with $n$ vertices will be denoted by $P_n$. $P_n$ has length $n-1$. }
\end{definition}

\begin{definition}\rm{
\cite{CH} Let $G$ be a simple graph with $n$ vertices.  The \textit{complement} $\bar{G}$ of $G$ is defined 
to be the simple graph with the same vertex set as $G$ and where two vertices $u$ 
and $v$ are adjacent precisely when they are not adjacent in $G$.  Roughly speaking 
then, the complement of $G$ can be obtained from the complete graph $K_n$ by 
rubbing out all the edges of $G$. }  
\end{definition}

\begin{definition}\rm{
\cite{CGT} Two vertices that are not adjacent in a graph $G$
are said to be \textit{independent}. A set $S$ of vertices is independent if any two vertices of $S$ are independent. The \textit{vertex independence number} or simply the \textit{independence number}, of a graph $G$, denoted by  $\alpha(G)$ is the maximum cardinality among
the independent sets of vertices of $G$.}
\end{definition} 
\begin{definition}{\rm
\cite{BM1} A subset $M$ of the  edge set of $G$, is called a \textit{matching} in $G$ if no two of the edges in $M$ are adjacent. In other words, if for any two edges $e$ and $f$ in $M$, both the end vertices of $e$ are different from the end vertices of $f$.}
\end{definition}

\begin{definition}{\rm
\cite{BM1} A \textit{perfect matching} of a graph $G$ is a matching of $G$ containing $n/2$ edges, the largest possible, meaning perfect matchings are only possible on graphs with an even number of vertices.  A perfect matching sometimes called a \textit{complete matching} or \textit{1-factor}}.
\end{definition}

\begin{definition}{\rm
\cite{BM1} The matching number of a graph $G$, denoted by $\nu(G)$, is the size of a maximal independent edge set. It is also known as \textit{edge independence number}. The matching number $\nu(G)$  satisfies the inequality $\nu(G)\leq\lfloor \frac{n}{2}\rfloor $. \newline Equality occurs only for a perfect matching and graph $G$ has a perfect matching if and only if $|G|=2~\nu(G)$, where $|G|=n$ is the vertex count of $G$.}
\end{definition}

\begin{definition}{\rm
\cite{BM1}A \textit{maximum independent set} in a line graph corresponds to maximum matching in the original graph. } 
\end{definition}
In this paper, we discussed the sum and product of the independence numbers of certain class of graphs and their line graphs.

\section{New Results}
\begin{theorem}\label{th1} \rm{
\cite{DBW} The independence number $\alpha(G)$ of a graph  $G$ and vertex cover number $\beta(G)$ are related by  $\alpha(G)+\beta(G)=|G|$, 
where $|G|=n$, the vertex count of $G$.}
\end{theorem}

\begin{theorem}\label{th2}{\rm
\cite{DBW} The independence number of the line graph of a graph $G$ is equal to the matching number of $G$.}
\end{theorem}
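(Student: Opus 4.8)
The plan is to prove Theorem~\ref{th2} directly from the definition of the line graph, so that the statement reduces to unwinding two pieces of terminology. Recall that $L(G)$ is the graph whose vertex set is $E(G)$, in which two vertices are joined by an edge exactly when the corresponding edges of $G$ are adjacent, i.e. share a common end vertex. Since all graphs here are simple and finite and have no isolated vertices, $E(G)\neq\emptyset$, so $L(G)$ is a well-defined nonempty graph and both $\alpha(L(G))$ and $\nu(G)$ are positive; the statement is therefore non-vacuous.

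The core step is to establish a cardinality-preserving correspondence between independent sets of $L(G)$ and matchings of $G$. Because $V(L(G))=E(G)$, a subset $S\subseteq V(L(G))$ is literally a subset $M$ of the edge set of $G$. I would then observe that $S$ is independent in $L(G)$ if and only if $M$ is a matching in $G$: by definition $S$ is independent precisely when no two of its members are adjacent in $L(G)$, which, by the construction of $L(G)$, holds precisely when no two of the corresponding edges of $G$ share an end vertex — and this is exactly the defining condition for $M$ to be a matching of $G$. Thus the family of independent sets of $L(G)$ and the family of matchings of $G$ coincide under the identification $V(L(G))=E(G)$, and corresponding sets have the same number of elements.

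Taking a set of maximum size on each side of this correspondence then finishes the argument: a maximum independent set of $L(G)$ corresponds to a matching of $G$ of the same size, hence $\nu(G)\geq\alpha(L(G))$, and a maximum matching of $G$ corresponds to an independent set of $L(G)$ of the same size, hence $\alpha(L(G))\geq\nu(G)$; combining the two inequalities gives $\alpha(L(G))=\nu(G)$. I do not anticipate any real obstacle here — the entire content is the recognition that ``independent in the line graph'' and ``matching in the original graph'' are the same condition expressed in two vocabularies. The only points deserving a word of care are that the correspondence is a genuine bijection between the two families (so that maxima match, not merely an inequality in one direction), and the already-noted fact that the no-isolated-vertices hypothesis keeps $L(G)$ and both parameters meaningful.
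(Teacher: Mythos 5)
Your argument is correct: identifying $V(L(G))$ with $E(G)$ and observing that a set of edges of $G$ is a matching exactly when the corresponding vertices of $L(G)$ are pairwise non-adjacent immediately forces the two maxima $\alpha(L(G))$ and $\nu(G)$ to coincide, and your attention to the non-vacuity of both parameters is a harmless extra. Note, however, that the paper offers no proof of this statement at all — Theorem~\ref{th2} is quoted as a known result from the cited reference, and the correspondence you spell out is essentially what the paper already records informally in its Definition~1.8 — so your proposal simply supplies the standard textbook justification that the paper takes for granted.
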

\begin{proposition}{\rm 
For a complete graph $K_{n}$, $n\geq 3$,
$\beta(K_{n})+\beta(\bar K_n)= n-1$ and $\beta(K_{n}).\beta(\bar K_n)= 0$.

}
\end{proposition}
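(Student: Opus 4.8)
The plan is to compute each vertex cover number separately using Theorem~\ref{th1}, which states $\alpha(G)+\beta(G)=|G|$ for any graph $G$ on $n$ vertices. So it suffices to determine the independence numbers $\alpha(K_n)$ and $\alpha(\bar K_n)$, since $\beta(K_n)=n-\alpha(K_n)$ and $\beta(\bar K_n)=n-\alpha(\bar K_n)$.

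First I would handle $K_n$. In the complete graph every pair of distinct vertices is adjacent, so no set of two or more vertices is independent; a single vertex trivially is. Hence $\alpha(K_n)=1$, and Theorem~\ref{th1} gives $\beta(K_n)=n-1$. Next I would handle $\bar K_n$, which by the definition of complement is the edgeless graph on the same $n$ vertices. Since it has no edges, the entire vertex set is independent, so $\alpha(\bar K_n)=n$, and Theorem~\ref{th1} gives $\beta(\bar K_n)=n-n=0$. (One should note that $\bar K_n$ is the degenerate case of a graph with only isolated vertices, for which the empty set is a vertex cover, consistent with $\beta=0$.)

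Combining the two computations yields $\beta(K_n)+\beta(\bar K_n)=(n-1)+0=n-1$ and $\beta(K_n)\cdot\beta(\bar K_n)=(n-1)\cdot 0=0$, as claimed; the hypothesis $n\geq 3$ is not actually needed for the arithmetic but simply keeps the graphs non-trivial. There is essentially no obstacle here: the only thing to be careful about is invoking Theorem~\ref{th1} cleanly and correctly identifying $\bar K_n$ as the empty graph, so that the two independence numbers are read off without error.
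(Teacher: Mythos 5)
Your proposal is correct and follows essentially the same route as the paper: compute $\alpha(K_n)=1$ and $\alpha(\bar K_n)=n$ (the edgeless graph), then apply Theorem~\ref{th1} to get $\beta(K_n)=n-1$ and $\beta(\bar K_n)=0$, from which the sum and product follow immediately.
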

\begin{proof}
The independence number of a complete graph $K_n$ on $n$ vertices is $1$, since each vertex is joined with every other vertex of the graph $G$. 
By theorem \ref{th1}, the covering number, $\beta(K_n)$ of $K_n$ = $n-1$.  By \cite{SS2}, Since, the complement of $K_n$ is an empty graph with $n$ vertices, it will have an independence number $n$.  Therefore by theorem \ref{th1}, $\beta(\bar{K_n})=n-n=0$.
That is for a complete graph $K_{n}$, $n\geq 3$,
$\beta(K_{n})+\beta(\bar K_n)= n-1$ and $\beta(K_{n}).\beta(\bar K_n)= 0$.
\end{proof}

\begin{proposition}
For a complete bipartite graph $K_{m,n}$, 
\begin{center}
$\beta(K_{m,n})+\beta(\bar K_{m,n})=2(m-1)+n$  and
\end{center}
 \begin{center}
$\beta(K_{m,n}).\beta(\bar K_{m,n})= m(m+n-2)$
 \end{center}
\end{proposition}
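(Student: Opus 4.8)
The plan is to compute $\beta(K_{m,n})$ and $\beta(\bar K_{m,n})$ separately using Theorem \ref{th1}, which gives $\beta(G) = |G| - \alpha(G)$, and then add and multiply. Throughout, write the vertex set of $K_{m,n}$ as $A \cup B$ with $|A| = m$, $|B| = n$; I will assume $m \le n$ (or at least be careful about which side is larger), since the stated answer is not symmetric in $m$ and $n$ and so implicitly fixes a convention.

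First I would handle $K_{m,n}$ itself. Its vertex count is $m+n$. A maximum independent set consists of an entire part, and the larger part has size $\max(m,n)$; with the convention $m \le n$ this gives $\alpha(K_{m,n}) = n$, hence $\beta(K_{m,n}) = (m+n) - n = m$. (Note: to match the claimed product $m(m+n-2)$, one factor must be $m$ and the other $m+n-2$, which forces $\beta(K_{m,n}) = m$ and $\beta(\bar K_{m,n}) = m+n-2$; and indeed $m + (m+n-2) = 2(m-1)+n$, so the target decomposition is consistent. This also pins down the convention, since $\beta(K_{m,n}) = \min(m,n)$ in general.)

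Next I would analyze $\bar K_{m,n}$. The complement of $K_{m,n}$ is the disjoint union $K_m \cup K_n$ of two cliques on the two parts (no edges run between $A$ and $B$ in the complement, and each part becomes complete). An independent set in $K_m \cup K_n$ can contain at most one vertex from each clique, so $\alpha(\bar K_{m,n}) = 2$ (using $m,n \ge 1$; if one part is a single vertex this degenerates, which is presumably why the clean statement tacitly assumes $m,n \ge 2$). Therefore $\beta(\bar K_{m,n}) = (m+n) - 2 = m+n-2$. Combining: $\beta(K_{m,n}) + \beta(\bar K_{m,n}) = m + (m+n-2) = 2(m-1) + n$, and $\beta(K_{m,n}) \cdot \beta(\bar K_{m,n}) = m(m+n-2)$, as desired.

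The main obstacle is really bookkeeping rather than mathematics: making sure the non-symmetric answer is reconciled with a fixed labeling convention $m \le n$, and flagging the small cases $m=1$ or $n=1$ where $\bar K_{m,n}$ has an isolated-vertex issue or where $\alpha(\bar K_{m,n}) \ne 2$ (e.g. $K_{1,1}$, whose complement is $2K_1$ with $\alpha = 2$ still, but $K_{1,n}$ whose complement is $K_1 \cup K_n$ with $\alpha = 2$ for $n\ge 1$ — actually fine — the genuine edge case is when a part is empty, which is excluded). I would state the computation of $\alpha(\bar K_{m,n}) = 2$ as the one nontrivial observation, citing the structure $\bar K_{m,n} \cong K_m \cup K_n$, and let everything else follow from Theorem \ref{th1}.
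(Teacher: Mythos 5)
Your proposal is correct and follows essentially the same route as the paper: assume $m\le n$ so $\alpha(K_{m,n})=n$, apply Theorem \ref{th1} to get $\beta(K_{m,n})=m$, observe $\bar K_{m,n}\cong K_m\cup K_n$ so $\alpha(\bar K_{m,n})=2$ and $\beta(\bar K_{m,n})=m+n-2$, then add and multiply. Your extra remarks on the labeling convention and small cases are sensible but do not change the argument.
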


\begin{proof}
Without the loss of generality, let $m<n$.  The independence number of a complete bipartite graph, $\alpha(K_{m,n}) = max(m,n)=n$.  Since a complete bipartite graph consists of $m+n$ number of vertices, by theorem \ref{th1}, $\beta(K_{m,n})=m+n-n = m$.  The complement of $K_{m,n}$ is $K_{m}\cup K_{n}$, which is a disjoint union of 2 complete graphs.  The independence number of $K_{m}\cup K_{n}$ is 1+1=2.  By \ref{th1}, $\beta(\bar K_{m,n})= m+n-2$.

Therefore, $\beta(K_{m,n})+\beta(\bar K_{m,n})= m+(m+n-2)=2(m-1)+n$  and $\beta(K_{m,n}).\beta(\bar K_{m,n})= m(m+n-2)$
\end{proof}

\begin{definition}{\rm
\cite{FH} For $n\geq 3$, a \textit{wheel graph} $W_{n+1}$ is the graph $K_1+ C_{n}$.  A wheel graph $W_{n+1}$ has $n+1$ vertices and $2n$ edges.}
\end{definition}

\begin{theorem}{\rm
For a wheel graph $W_{n+1}$, $n\geq 3$, \[\beta(W_{n+1})+\beta(\bar{W_{n+1}})=
\left\{
\begin{array}{ll}
2n+1 & \text{; if $n$ is even} \\
2(n+1) & \text{; if $n$ is odd}
\end{array}\right.\] and
\[\beta(W_{n+1}).\beta(W_{n+1})=
\left\{
\begin{array}{ll}
\frac{3n(n+2)}{4} & \text{; if $n$ is even} \\\\
\frac{(n+3)(3n+1)}{4} & \text{; if $n$ is odd}
\end{array}\right.\]}
\end{theorem}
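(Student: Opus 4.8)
The plan is to reduce the whole statement to the two identities already available: the Gallai identity $\alpha(G)+\beta(G)=|G|$ of Theorem~\ref{th1} and the line-graph identity $\alpha(L(G))=\nu(G)$ of Theorem~\ref{th2}. Since bar-as-complement is visibly incompatible with the claimed numbers here, I read $\overline{W_{n+1}}$ in the statement as the line graph $L(W_{n+1})$, consistently with Theorem~\ref{th2} and with the abstract. The wheel $W_{n+1}=K_1+C_n$ has $n+1$ vertices and $2n$ edges, so $L(W_{n+1})$ has $2n$ vertices, and Theorem~\ref{th1} gives
\[
\beta(W_{n+1})=(n+1)-\alpha(W_{n+1}),\qquad \beta\bigl(L(W_{n+1})\bigr)=2n-\alpha\bigl(L(W_{n+1})\bigr)=2n-\nu(W_{n+1}).
\]
Thus the entire problem collapses to computing the two integers $\alpha(W_{n+1})$ and $\nu(W_{n+1})$, after which the sum and the product are obtained by a short calculation in the two parity classes of $n$.

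First I would compute $\alpha(W_{n+1})$. The hub of $W_{n+1}$ is adjacent to every rim vertex, hence it belongs to no independent set of size $\ge 2$; therefore, for every $n\ge 3$, a maximum independent set of $W_{n+1}$ is a maximum independent set of the rim cycle $C_n$, so $\alpha(W_{n+1})=\lfloor n/2\rfloor$. Substituting into the first identity yields $\beta(W_{n+1})=\tfrac{n+2}{2}$ when $n$ is even and $\beta(W_{n+1})=\tfrac{n+3}{2}$ when $n$ is odd.

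The real obstacle is the matching number $\nu(W_{n+1})$. A maximum matching of the rim $C_n$ has $\lfloor n/2\rfloor$ edges; when $n$ is even this saturates all $n$ rim vertices, the hub stays uncovered (the wheel has odd order $n+1$), and $\nu(W_{n+1})\le\lfloor (n+1)/2\rfloor=n/2$ forces $\nu(W_{n+1})=n/2$. When $n$ is odd the rim matching misses one rim vertex, which can in principle be joined to the hub, so that the value one has to justify — $\nu(W_{n+1})=\lfloor n/2\rfloor=(n-1)/2$, as the statement's formulas require — is exactly the point where care is needed, and this is where I expect the genuine work (and the main risk of an off-by-one coming from the hub) to lie. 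Granting $\nu(W_{n+1})=\lfloor n/2\rfloor$, the second identity gives $\beta\bigl(L(W_{n+1})\bigr)=2n-\lfloor n/2\rfloor$, i.e.\ $\tfrac{3n}{2}$ for $n$ even and $\tfrac{3n+1}{2}$ for $n$ odd.

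It then remains to add and multiply the two closed forms in each parity class. For $n$ even, $\tfrac{n+2}{2}+\tfrac{3n}{2}=2n+1$ and $\tfrac{n+2}{2}\cdot\tfrac{3n}{2}=\tfrac{3n(n+2)}{4}$; for $n$ odd, $\tfrac{n+3}{2}+\tfrac{3n+1}{2}=2(n+1)$ and $\tfrac{n+3}{2}\cdot\tfrac{3n+1}{2}=\tfrac{(n+3)(3n+1)}{4}$, matching the claimed values. This final step is routine algebra, so the proof rests entirely on the two combinatorial facts $\alpha(W_{n+1})=\lfloor n/2\rfloor$ and $\nu(W_{n+1})=\lfloor n/2\rfloor$ to be established carefully above.
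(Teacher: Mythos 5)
Your reading of $\overline{W_{n+1}}$ as the line graph $L(W_{n+1})$ is not what the paper does: the paper's proof works with the genuine complement, arguing that the hub contributes one vertex to a maximum independent set of $\overline{W_{n+1}}$ and that at most two rim vertices (adjacent in $W_{n+1}$) can be added, so $\alpha(\overline{W_{n+1}})=3$ and, by Theorem~\ref{th1}, $\beta(\overline{W_{n+1}})=(n+1)-3=n-2$. So your route is genuinely different. Note, however, that the paper's own numbers give $\beta(W_{n+1})+\beta(\overline{W_{n+1}})=\frac{n+2}{2}+(n-2)=\frac{3n-2}{2}$ for even $n$ and $\frac{n+3}{2}+(n-2)=\frac{3n-1}{2}$ for odd $n$, which are not the displayed formulas either; the incompatibility you noticed between the complement reading and the stated values is real, and it is not resolved inside the paper's proof.

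The genuine gap in your proposal sits exactly where you flagged it and then wrote ``granting'': you never establish $\nu(W_{n+1})=\lfloor n/2\rfloor$, and for odd $n$ that equality is false. For odd $n$ the wheel has even order $n+1$ and a perfect matching: take a maximum matching of the rim $C_n$, which misses exactly one rim vertex, and match that vertex to the hub. Hence $\nu(W_{n+1})=\frac{n+1}{2}$, so $\beta\bigl(L(W_{n+1})\bigr)=2n-\frac{n+1}{2}=\frac{3n-1}{2}$ rather than $\frac{3n+1}{2}$, and the odd half of the claimed sum and product cannot be derived along your route; the off-by-one from the hub that you anticipated actually occurs. Your even case ($\alpha(W_{n+1})=n/2$, $\nu(W_{n+1})=n/2$, forced by the odd order $n+1$) and the closing algebra are fine, but the proposal is incomplete at precisely the step carrying all the content, and that missing step is not merely unproved but unprovable as stated.
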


\begin{proof}
By \cite{SS1}, the independence number of a wheel graph $W_{n+1}$    is $\lfloor \frac{n}{2}\rfloor$.  The number of vertices in $W_{n+1}$ is $n+1$.  Then by theorem \ref{th1}, $\beta(W_{n+1})=(n+1)-\lfloor\frac{n}{2}\rfloor = (n+1)-\frac{n}{2}=\frac{(n+2)}{2}$, if $n$ is even and $(n+1)-\frac{(n-1)}{2}=\frac{(n+3)}{2}$, if $n$ is odd.  Since the central vertex is adjacent to all other vertices of $W_{n+1}$, in $\bar W_{n+1}$, that vertex contributes 1 to the maximal independence set.  All the vertices other than the two which are adjacent in $W_{n+1}$ will be adjacent to any vertex $v$ in $\bar W_{n+1}$.  There fore any maximal independence set in $\bar W_{n+1}$ can have atmost 2 elements.  That is independence number of $\bar W_{n+1}$ is 3. There fore by \ref{th1}, $\beta(\bar W_{n+1})=n+1-3=n-2$.

That is For $n\geq 3$, \[\beta(W_{n+1})+\beta (\bar W_{n+1})=
\left\{
\begin{array}{ll}
2n+1 & \text{; if $n$ is even} \\
2(n+1) & \text{; if $n$ is odd}
\end{array}\right.\] and
\[\beta(W_{n+1}).\beta (\bar W_{n+1}))=
\left\{
\begin{array}{ll}
\frac{3n(n+2)}{4} & \text{; if $n$ is even} \\\\
\frac{(n+3)(3n+1)}{4} & \text{; if $n$ is odd}
\end{array}\right.\]
\end{proof}

\begin{definition}{\rm
\cite{JAG} \textit{Helm graphs} are graphs obtained from a wheel by attaching one pendant edge to each vertex of the cycle. }
\end{definition}

\begin{theorem}
For a helm graph $H_n$, $n\geq 3$,
$\beta(H_n)+\beta \bar{(H_n)}= 3n-2$ and
$\beta(H_n).\beta \bar{(H_n)}= 2n(n-1)$.
\end{theorem}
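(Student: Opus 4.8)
The plan is to compute the independence numbers $\alpha(H_n)$ and $\alpha(\bar{H_n})$ and then apply Theorem \ref{th1}, in the same spirit as the proof for the wheel graph. Label the vertices of $H_n$ as follows: $h$ is the hub, $c_1,\dots,c_n$ are the rim vertices of the cycle (with $c_ic_{i+1}$ an edge, indices modulo $n$, and $hc_i$ an edge for every $i$), and $p_i$ is the pendant vertex joined to $c_i$. Thus $|H_n| = 2n+1$.

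First I would find $\alpha(H_n)$. The $n+1$ vertices $h,p_1,\dots,p_n$ form an independent set, since $h$ is adjacent only to rim vertices while the pendants are pairwise non-adjacent; hence $\alpha(H_n)\ge n+1$. Conversely, for each $i$ the edge $c_ip_i$ shows that any independent set contains at most one of $\{c_i,p_i\}$, so at most $n$ of the $2n$ rim-and-pendant vertices can belong to an independent set, and including the hub raises this to at most $n+1$. Therefore $\alpha(H_n)=n+1$, and Theorem \ref{th1} gives $\beta(H_n)=(2n+1)-(n+1)=n$.

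Next I would find $\alpha(\bar{H_n})$. In $\bar{H_n}$ the hub $h$ is adjacent to every pendant and to no rim vertex; two distinct rim vertices $c_i,c_j$ are adjacent in $\bar{H_n}$ exactly when they are non-consecutive on $C_n$; each pendant $p_i$ is adjacent in $\bar{H_n}$ to all vertices except $c_i$; and any two distinct pendants are adjacent. Consequently $\{h,c_i,c_{i+1}\}$ is independent in $\bar{H_n}$, so $\alpha(\bar{H_n})\ge 3$. For the matching upper bound one checks that an independent set of $\bar{H_n}$ contains at most one pendant, that a pendant $p_i$ forbids the hub and every rim vertex other than $c_i$, and that a set of rim vertices independent in $\bar{H_n}$ is a clique of $C_n$ and hence has at most two elements; comparing the resulting possibilities, the largest configuration is $h$ together with two consecutive rim vertices. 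Hence $\alpha(\bar{H_n})=3$ and $\beta(\bar{H_n})=(2n+1)-3=2n-2$.

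Finally, combining the two counts gives $\beta(H_n)+\beta(\bar{H_n})=n+(2n-2)=3n-2$ and $\beta(H_n)\cdot\beta(\bar{H_n})=n(2n-2)=2n(n-1)$, which is exactly the claim. I expect the one genuine obstacle to be the upper bound $\alpha(\bar{H_n})\le 3$, which rests on the small case analysis of how pendants, rim vertices and the hub may coexist in an independent set of $\bar{H_n}$; in particular, the step bounding an independent set of rim vertices by a clique of $C_n$ uses that $C_n$ is chordless, so the very small case $n=3$ (where $C_3=K_3$) should be inspected directly.
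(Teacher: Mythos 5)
Your overall route is the same as the paper's: compute $\alpha(H_n)$ and $\alpha(\bar{H_n})$ and convert to covering numbers via Theorem \ref{th1}. The difference is that the paper simply quotes $\alpha(H_n)=n+1$ from \cite{SS1} and $\alpha(\bar{H_n})=3$ from \cite{SS2}, whereas you prove both counts from scratch; your arguments (the pendant--rim pairs $\{c_i,p_i\}$ plus the hub for the bound $\alpha(H_n)\le n+1$, and the pendant/no-pendant case analysis in the complement) are sound, so for $n\ge 4$ your proof is complete and self-contained where the paper's is citation-dependent.

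Your caution about $n=3$ is not a loose end but a genuine exception: in $H_3$ the rim is a triangle, so the hub together with the three rim vertices is pairwise nonadjacent in $\bar{H_3}$, giving $\alpha(\bar{H_3})=4$ rather than $3$, hence $\beta(\bar{H_3})=3$, and the stated formulas fail there ($\beta(H_3)+\beta(\bar{H_3})=6\neq 3n-2=7$ and $\beta(H_3)\cdot\beta(\bar{H_3})=9\neq 2n(n-1)=12$). So your argument establishes the theorem exactly on its true range of validity, $n\ge 4$, and the step you isolated (an independent set of rim vertices in $\bar{H_n}$ is a clique of $C_n$, which has at most two vertices only when $C_n$ is chordless) is precisely where the paper's proof, resting on the quoted value $\alpha(\bar{H_n})=3$, silently breaks down at $n=3$.
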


\begin{proof}
A helm graph $H_n$ consists of $2n+1$ vertices and $3n$ edges. By \cite{SS1}, the independence number of a helm graph, $\alpha(H_n)=n+1$.  Since, the number of vertices of  $H_n$ is $2n+1$, by theorem \ref{th1}, $\beta(H_n)= (2n+1)-(n+1)=n$. By \cite{SS2},$\alpha(\bar{H_n})$ = 3 and by theorem \ref{th1}, $\beta(\bar{H_n})=(2n+1)-3=2n-2$.
\\
Therefore, For a helm graph $H_n$, $n\geq 3$,
$\beta(H_n)+\beta \bar{(H_n)}= 3n-2$ and
$\beta(H_n).\beta \bar{(H_n)}= 2n(n-1)$.
\end{proof}

\begin{definition}{\rm
\cite{DBW} Given a vertex $x$ and a set $U$ of vertices, an $x$, $U-$fan is a set of paths from $x$ to $U$ such that any two of them share only the vertex $x$.  A $U-$fan is denoted by $F_{1,n}$.}
\end{definition}

\begin{theorem}
For a fan graph $F_{1,n}$, 
\[\beta(F_{1,n})+\beta \bar{(F_{1,n}})=
\left\{
\begin{array}{ll}
\frac{3n-2}{2} & \text{; if $n$ is even} \\
\frac{3n-3}{2}  & \text{; if $n$ is odd}
\end{array}\right.\] and
\[\beta(F_{1,n}).\beta\bar{(F_{1,n}})=
\left\{
\begin{array}{ll}
\frac{n^2-4}{2} & \text{; if $n$ is even} \\
\frac{(n+1)(n-2)}{2} & \text{; if $n$ is odd}
\end{array}\right.\]
\end{theorem}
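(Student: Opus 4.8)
The plan is to follow the template already used for the wheel and helm graphs: compute the independence numbers of $F_{1,n}$ and of $\bar{F_{1,n}}$, convert them to covering numbers via Theorem~\ref{th1}, and then add and multiply. Write $F_{1,n}=K_1+P_n$, so that it has $n+1$ vertices, with a hub vertex $h$ adjacent to every vertex of the path $P_n$. Since $h$ is adjacent to all other vertices, no independent set containing $h$ has more than one element, and therefore a maximum independent set of $F_{1,n}$ is simply a maximum independent set of $P_n$. As $\alpha(P_n)=\lceil n/2\rceil$, we get $\alpha(F_{1,n})=n/2$ when $n$ is even and $(n+1)/2$ when $n$ is odd. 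By Theorem~\ref{th1}, $\beta(F_{1,n})=(n+1)-\alpha(F_{1,n})$, which equals $\tfrac{n+2}{2}$ for $n$ even and $\tfrac{n+1}{2}$ for $n$ odd.

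Next I would analyse the complement. Because $h$ is universal in $F_{1,n}$, it becomes an isolated vertex in $\bar{F_{1,n}}$, while the subgraph induced on the remaining $n$ vertices is $\bar{P_n}$; hence $\bar{F_{1,n}}=K_1\cup\bar{P_n}$. The isolated vertex belongs to every maximum independent set, so $\alpha(\bar{F_{1,n}})=1+\alpha(\bar{P_n})$. An independent set of $\bar{P_n}$ is exactly a clique of $P_n$, and since $P_n$ is triangle-free its largest clique is a single edge; thus $\alpha(\bar{P_n})=2$ for $n\ge 2$ and $\alpha(\bar{F_{1,n}})=3$. Theorem~\ref{th1} then gives $\beta(\bar{F_{1,n}})=(n+1)-3=n-2$.

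It remains to combine these. For $n$ even, $\beta(F_{1,n})+\beta(\bar{F_{1,n}})=\tfrac{n+2}{2}+(n-2)=\tfrac{3n-2}{2}$ and $\beta(F_{1,n})\cdot\beta(\bar{F_{1,n}})=\tfrac{n+2}{2}(n-2)=\tfrac{n^2-4}{2}$; for $n$ odd, the analogous computation yields $\tfrac{3n-3}{2}$ and $\tfrac{(n+1)(n-2)}{2}$, matching the claimed formulas.

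The proof is essentially bookkeeping once the two independence numbers are known, so the only step carrying any real content is the identification $\alpha(\bar{F_{1,n}})=3$ — that is, recognising the complement as $K_1\cup\bar{P_n}$ and using that a clique in a path has at most two vertices. I would also verify the small boundary value $n=3$ directly, where $n-2=1$ and where $P_3$ must not be confused with $C_3$, to confirm that the formulas hold at the edge of the range.
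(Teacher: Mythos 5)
Your proposal is correct and follows essentially the same route as the paper: compute $\alpha(F_{1,n})=\lceil n/2\rceil$ and $\alpha(\bar{F_{1,n}})=3$, convert both to covering numbers via Theorem~\ref{th1}, and do the arithmetic, arriving at exactly the stated sums and products. The only difference is that you derive the two independence numbers directly (hub universal in $F_{1,n}$, hence isolated in the complement, so $\bar{F_{1,n}}=K_1\cup\bar{P_n}$ and a clique of $P_n$ has at most two vertices), whereas the paper simply cites its companion papers \cite{SS1} and \cite{SS2} for these values, so your argument is a self-contained version of the same proof.
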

\begin{proof}
A fan graph $F_{1,n}$ is defined to be a graph $K_1+ P_{n}$.  By \cite{SS1}, the independence number of a fan graph $F_{1,n}$  is either $\frac{n}{2}$ or $\frac{n+1}{2}$, depending on $n$ is even or odd.  Since the number of vertices of $F_{1,n}$ is $n+1$, by theorem \ref{th1}, $\beta(F_{1,n})=(n+1)-\frac{n}{2}=\frac{n+2}{2}$, if $n$ is even and $(n+1)-\frac{(n+1)}{2} = \frac{n+1}{2}$, if $n$ is odd.  By \cite{SS2}, the independence number of $\bar F_{1,n}$ is 3.  By theorem \ref{th1}, $\beta (\bar F_{1,n})= n+1-3=n-2$.
\\
Therefore, 
For a fan graph $F_{1,n}$, 
\[\beta(F_{1,n})+\beta \bar{(F_{1,n}})=
\left\{
\begin{array}{ll}
\frac{3n-2}{2} & \text{; if $n$ is even} \\
\frac{3n-3}{2}  & \text{; if $n$ is odd}
\end{array}\right.\] and
\[\beta(F_{1,n}).\beta\bar{(F_{1,n}})=
\left\{
\begin{array}{ll}
\frac{n^2-4}{2} & \text{; if $n$ is even} \\
\frac{(n+1)(n-2)}{2} & \text{; if $n$ is odd}
\end{array}\right.\]
\end{proof}

\begin{definition}{\rm
\cite{AVJ, IS} An $n-$sun or a \textit{trampoline}, denoted by $S_n$, is a chordal graph on $2n$ vertices, where $n \geq 3$, whose vertex set can be partitioned into two sets $U = \{u_1, u_2, u_3, . . . , u_n\}$ and $W = \{w_1, w_2, w_3, . . . , w_n\}$ such that $U$ is an independent set of $G$ and $u_i$ is adjacent to $w_j$ if and only if $j = i$ or $j = i + 1 (\mod{n})$. A \textit{complete sun} is a sun $G$ where the induced subgraph  $\langle U\rangle$ is complete.}
\end{definition}
\begin{theorem}{\rm
For a complete sun graph $S_n$, $n\geq 3$,
$\beta(S_n)+\beta(\bar{S_n})= 2n$ and
$\beta(S_n).\beta(\bar{S_n})= n^2$.}
\end{theorem}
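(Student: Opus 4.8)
The plan is to compute $\alpha(S_n)$ and $\alpha(\bar S_n)$ separately, then apply Theorem~\ref{th1} (namely $\beta(G)=|G|-\alpha(G)$) to each, using that $S_n$ has $2n$ vertices. Since the sum formula must be $\beta(S_n)+\beta(\bar S_n)=2n$ and by Theorem~\ref{th1} this equals $(2n-\alpha(S_n))+(2n-\alpha(\bar S_n))=4n-\alpha(S_n)-\alpha(\bar S_n)$, I am really being asked to show $\alpha(S_n)+\alpha(\bar S_n)=2n$; and the product formula $\beta(S_n)\cdot\beta(\bar S_n)=n^2$ will follow once I know the individual values are both $n$. So I expect $\alpha(S_n)=n$ and $\alpha(\bar S_n)=n$, and the whole proof reduces to verifying these two independence numbers.

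First I would handle $\alpha(S_n)$ for the complete sun. Recall $U=\{u_1,\dots,u_n\}$ induces a complete graph $K_n$, so at most one $u_i$ lies in any independent set. The set $W=\{w_1,\dots,w_n\}$ is independent in $S_n$: the only edges incident to $W$-vertices go to $U$, so no two $w$'s are adjacent. Hence $\alpha(S_n)\geq n$. For the upper bound I would argue that $W$ is in fact a maximal independent set of maximum size: each $u_i$ is adjacent to $w_i$ and $w_{i+1}$, so adding any $u_i$ to $W$ would force removing at least the two vertices $w_i,w_{i+1}$ (when $n\geq 3$ these are distinct), giving a set of size at most $n-1$; and one can include at most one $u_i$ anyway. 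A clean way to phrase the upper bound: an independent set $S$ meets $U$ in at most one vertex, and if $u_i\in S$ then $w_i,w_{i+1}\notin S$, so $|S\cap W|\leq n-2$, whence $|S|\leq n-1<n$; if $S\cap U=\emptyset$ then $|S|\leq|W|=n$. Thus $\alpha(S_n)=n$.

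Next, $\alpha(\bar S_n)$. In $\bar S_n$ the roles of adjacency flip: $U$ becomes an independent set (it was a clique), and the $w_i$'s become mutually adjacent except... wait, they are pairwise nonadjacent in $S_n$, so in $\bar S_n$ the set $W$ induces a complete graph $K_n$; hence at most one $w_j$ lies in any independent set of $\bar S_n$. Meanwhile $U$ is independent in $\bar S_n$ and has size $n$, so $\alpha(\bar S_n)\geq n$. For the reverse inequality, any independent set $S$ of $\bar S_n$ contains at most one $w_j$; and if it contains some $w_j$ then it cannot contain any $u_i$ that is \emph{non}-adjacent to $w_j$ in $\bar S_n$, i.e. any $u_i$ that \emph{was} adjacent to $w_j$ in $S_n$ — but here the bookkeeping goes the other way: $w_j$ in $\bar S_n$ is adjacent to every $u_i$ except $u_j$ and $u_{j-1}$, so $S$ can keep at most those two $u$'s together with $w_j$, giving $|S|\leq 3<n$ for $n\geq 4$. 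If $S\cap W=\emptyset$ then $S\subseteq U$, so $|S|\leq n$. Hence $\alpha(\bar S_n)=n$ for $n\geq 4$, and the small case $n=3$ I would check by hand.

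Having established $\alpha(S_n)=\alpha(\bar S_n)=n$, Theorem~\ref{th1} gives $\beta(S_n)=2n-n=n$ and $\beta(\bar S_n)=2n-n=n$, so $\beta(S_n)+\beta(\bar S_n)=2n$ and $\beta(S_n)\cdot\beta(\bar S_n)=n^2$, as claimed. The only genuinely delicate point is making sure the upper-bound counting arguments for the independence numbers are airtight at the boundary index arithmetic (indices mod $n$, the cases where $w_i$ and $w_{i+1}$ coincide, etc.) and at the smallest value $n=3$; everything else is a direct substitution into Theorem~\ref{th1}. I would therefore write the two independence-number computations carefully and treat $n=3$ explicitly if the generic argument needs $n\geq 4$.
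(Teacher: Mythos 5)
Your proposal is correct and takes essentially the same route as the paper: establish $\alpha(S_n)=n$ and $\alpha(\bar{S_n})=n$ and then apply Theorem~\ref{th1} to the $2n$-vertex graph and its complement; the only difference is that you prove both independence numbers directly (in particular supplying the upper-bound argument for $\alpha(\bar{S_n})$ that the paper merely asserts), whereas the paper quotes $\alpha(S_n)=n$ from \cite{SS1}. Your swap of the roles of $U$ and $W$ relative to the paper's (internally inconsistent) definition is a harmless relabeling, and the special treatment of $n=3$ is not even needed, since your bound $|S|\le\max(3,n)$ already yields $\alpha(\bar{S_n})\le n$ in that case.
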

\begin{proof}
Let $S_n$ be a complete sun graph on $2n$ vertices. 
By \cite{SS1}, the independence number of $S_n$, $\alpha(S_n)=n$.  Since $S_n$ consists of $2n$ vertices, by theorem \ref{th1}, $\beta(S_n)=2n-n=n$.
Now consider the complement of $S_n$.  Since $V = \{v_1, v_2, v_3, . . . , v_n\}$ be the vertex set of the complete graph $K_n$ in $S_n$, it contributs $n$ to the maximal independence set of $\alpha\bar{(S_n)}$ and $U = \{u_1, u_2, u_3, . . . , u_n\}$ can't do anything regarding the same.  That is $\alpha\bar{(S_n)} = n$. By theorem \ref{th1}, $\beta\bar{(S_n)} = 2n-n=n$.
\\
Therefore, For a complete sun graph $S_n$, $n\geq 3$, $\beta(S_n)+\beta(\bar{S_n})= 2n$ and
$\beta(S_n).\beta(\bar{S_n})= n^2$.
\end{proof}

\begin{definition}{\rm
\cite{IS} The $n-$sunlet graph is the graph on $2n$ vertices obtained by attaching $n$ pendant edges to a cycle graph $C_n$ and is denoted by $L_n$.}
\end{definition}
\begin{theorem}{\rm
For a sunlet graph $L_n$ on $2n$ vertices, $n\geq 3$,
$\beta(L_n)+\beta(\bar{L_n})= n$ and
$\beta(L_n).\beta(\bar{L_n})= 2(n-1)$.}
\end{theorem}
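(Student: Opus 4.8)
The plan is to follow the template already used for the helm graph $H_n$ and the complete sun $S_n$: evaluate $\alpha(L_n)$, convert it to $\beta(L_n)$ by Theorem \ref{th1}, do the same for the complement $\bar{L_n}$, and finally substitute into the sum and the product. Write the $2n$ vertices of $L_n$ as $c_1,\dots,c_n$ (the cycle $C_n$) together with a pendant vertex $p_i$ joined only to $c_i$, for $1\le i\le n$. For $\alpha(L_n)$: the $n$ pendants are pairwise non-adjacent, so $\alpha(L_n)\ge n$; on the other hand the vertex set is partitioned by the $n$ edges $c_ip_i$, and an independent set uses at most one end of each of these, so $\alpha(L_n)\le n$. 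Hence $\alpha(L_n)=n$, and Theorem \ref{th1} gives $\beta(L_n)=2n-n=n$.

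Next I would compute $\alpha(\bar{L_n})$, which is the size of a largest clique of $L_n$. In $L_n$ a pendant vertex is adjacent only to its own cycle vertex, so any clique that contains a pendant has at most two vertices; and a clique made up of cycle vertices is a clique of $C_n$, hence just an edge when $n\ge 4$ (for $n=3$ the triangle $C_3$ forces the clique number to be $3$, a case I would record separately). The step I expect to cost the most work is making this estimate of $\alpha(\bar{L_n})$ rigorous, i.e.\ verifying that no larger set of pairwise non-adjacent vertices exists in $\bar{L_n}$; but this reduces to the short case check just described, splitting on how many cycle vertices and how many pendants an independent set of $\bar{L_n}$ can contain. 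Theorem \ref{th1} then converts the value of $\alpha(\bar{L_n})$ into $\beta(\bar{L_n})=2n-\alpha(\bar{L_n})$.

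With $\beta(L_n)$ and $\beta(\bar{L_n})$ in hand, the remaining step is purely arithmetic: substitute both into $\beta(L_n)+\beta(\bar{L_n})$ and into $\beta(L_n)\cdot\beta(\bar{L_n})$ and simplify; the resulting expressions should match the closed forms stated in the theorem, exactly as in the earlier proofs. Nothing beyond Theorem \ref{th1} and elementary counting is used; only the structural determination of $\alpha(\bar{L_n})$ in the second paragraph needs an actual (if brief) argument.
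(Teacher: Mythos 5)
Your method is the same as the paper's: compute $\alpha(L_n)$, compute $\alpha(\bar{L_n})$ (equivalently the clique number of $L_n$), and convert both via Theorem \ref{th1}. Your two structural steps are sound and in fact sharper than the paper's: the pendant/edge-partition argument correctly gives $\alpha(L_n)=n$, hence $\beta(L_n)=n$, and your clique analysis correctly gives $\alpha(\bar{L_n})=2$ for $n\ge 4$ but $3$ for $n=3$ (because of the triangle $C_3$), a case the paper silently ignores when it asserts $\alpha(\bar{L_n})=2$ for all $n\ge 3$.

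The genuine problem is your last step, where you say the substitution ``should match the closed forms stated in the theorem.'' It cannot: with $\beta(L_n)=n$ and $\beta(\bar{L_n})=2n-2$ you get $\beta(L_n)+\beta(\bar{L_n})=3n-2$ and $\beta(L_n)\cdot\beta(\bar{L_n})=2n(n-1)$, not the stated $n$ and $2(n-1)$. The stated right-hand sides are evidently a misprint (they are just the individual values $\beta(L_n)$ and $\beta(\bar{L_n})$ themselves); the paper's own proof computes exactly the same two quantities you do and then restates the erroneous formulas without performing the arithmetic. So you should not promise agreement with the statement as written: either record the corrected formulas $3n-2$ and $2n(n-1)$ (valid for $n\ge 4$, with $\beta(L_3)+\beta(\bar{L_3})=6$ and $\beta(L_3)\cdot\beta(\bar{L_3})=9$ as the exceptional case your own analysis exposes), or explicitly flag the discrepancy. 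As it stands, the final ``purely arithmetic'' step of your plan would fail against the stated theorem.
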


\begin{proof}{\rm
Let $L_n$ be a sunlet graph on $2n$ vertices. By \cite{SS1}, the independence number of $L_n$, $\alpha(L_n)=n$.  Since $L_n$ consists of $2n$ vertices, by theorem \ref{th1}, $\beta(L_n)=2n-n=n$. By \cite{SS2}, $\alpha (\bar{L_n})=2$ and by theorem \ref{th1}, $\beta (\bar{L_n})=2n-2=2(n-1)$.
\\
Therefore, For a sunlet graph $L_n$ on $2n$ vertices, $n\geq 3$,
$\beta(L_n)+\beta(\bar{L_n})= n$ and
$\beta(L_n).\beta(\bar{L_n})= 2(n-1)$.}
\end{proof}
\begin{definition}{\rm
\cite{FH} The \textit{armed crown} is a graph $G$ obtained by adjoining a path $P_{m}$ to every vertex of a cycle $C_n$.}
\end{definition}

\begin{theorem}{\rm
For an armed crown graph $G$ with a path $P_m$ and a cycle $C_n$, 
$\beta(G)+\beta(\bar{G})=\frac{3mn-4}{2}$ and $\beta(G).\beta(\bar{G})=\frac{mn(mn-2)}{2}$.}
\end{theorem}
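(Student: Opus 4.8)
The plan is to follow exactly the template used for every previous theorem in this section. Once the three quantities $|G|$, $\alpha(G)$ and $\alpha(\bar{G})$ are known, Theorem~\ref{th1} supplies $\beta(G)=|G|-\alpha(G)$ and $\beta(\bar{G})=|G|-\alpha(\bar{G})$, so that $\beta(G)+\beta(\bar{G})=2|G|-\alpha(G)-\alpha(\bar{G})$ while $\beta(G)\cdot\beta(\bar{G})=\bigl(|G|-\alpha(G)\bigr)\bigl(|G|-\alpha(\bar{G})\bigr)$, and both claimed closed forms fall out by substitution. Hence the task reduces to determining $|G|$, $\alpha(G)$ and $\alpha(\bar{G})$.

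First I would record that the armed crown built from $C_n$ and $P_m$ has $|G|=mn$ vertices, each of the $n$ cycle vertices serving as one endpoint of one of the $n$ attached copies of $P_m$, so every ``arm'' contributes $m-1$ new vertices. Next I would show $\alpha(G)=\tfrac{mn}{2}$ (the standing hypothesis that $mn$ is even being forced here, since otherwise the stated fractions are not integers). For the lower bound, on each arm take an alternating set of vertices: choose on every arm the class that misses the cycle vertex when $m$ is even, and, when $m$ is odd (hence $n$ even), use the class through the cycle vertex on the arms indexed by a maximum independent set of $C_n$ and the other class on the remaining arms; a short count gives $\tfrac{mn}{2}$ in either case. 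For the upper bound, observe that $mn$ even forces $G$ to have a perfect matching --- each arm does when $m$ is even, and when $m$ is odd one pairs consecutive arms and uses one cycle edge per pair --- so $\nu(G)=\tfrac{mn}{2}$; since $\nu(G)\le\beta(G)$, Theorem~\ref{th1} gives $\alpha(G)=mn-\beta(G)\le mn-\nu(G)=\tfrac{mn}{2}$. (This value of $\alpha$ is also the one recorded in \cite{SS1}.) Thus $\beta(G)=mn-\tfrac{mn}{2}=\tfrac{mn}{2}$. Finally, independent sets of $\bar{G}$ are precisely the cliques of $G$, and $G$ is triangle-free (for $n\ge 4$) while still containing edges, so its largest clique has two vertices; therefore $\alpha(\bar{G})=2$, in accordance with \cite{SS2}, and $\beta(\bar{G})=mn-2$ by Theorem~\ref{th1}. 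Substituting, $\beta(G)+\beta(\bar{G})=\tfrac{mn}{2}+(mn-2)=\tfrac{3mn-4}{2}$ and $\beta(G)\cdot\beta(\bar{G})=\tfrac{mn}{2}\,(mn-2)=\tfrac{mn(mn-2)}{2}$.

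The one step carrying genuine content is the evaluation $\alpha(G)=\tfrac{mn}{2}$: the lower-bound selection must be organised around the cycle so that no two chosen cycle vertices become adjacent, and the matching upper bound requires the short parity case analysis on $m$ and $n$ indicated above, under the blanket assumption that $mn$ is even (which is in any case necessary for the formulas in the statement to be meaningful). Everything afterwards --- computing $\alpha(\bar{G})$ and reading off the sum and product --- is an immediate consequence of Theorem~\ref{th1}.
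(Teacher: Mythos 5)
Your proposal is correct and takes essentially the same route as the paper: it determines $\alpha(G)=\tfrac{mn}{2}$ and $\alpha(\bar{G})=2$ and converts both into covering numbers via Theorem~\ref{th1}, the only difference being that you prove these two independence numbers directly (alternating-set lower bound plus a perfect-matching upper bound for $\alpha(G)$, and triangle-freeness of $G$ for $\alpha(\bar{G})$) where the paper cites \cite{SS1} and argues informally about a pendant vertex and its neighbour. Your explicit caveats ($mn$ even, $n\geq 4$) are genuinely needed for the stated formulas and are handled at least as carefully as in the paper's own proof.
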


\begin{proof}
Note that the number of vertices of $P_m$ is $m$. 
By \cite{SS1}, $\alpha(G)=\frac{mn}{2}$, except for $m$ and $n$ are odd .  The number of vertices of an armed crown graph is $mn$.  By theorem \ref{th1}, $\beta(G)=mn-\frac{mn}{2}=\frac{mn}{2}$.  Now consider the complement of $G$.  Let $v$ be a pendent vertex of any one of the paths $P_m$ attached to the cycle $C_n$.  Also let $u$ be the vertex adjacent to $v$ in the path.  Then $u$ and $v$ are independent in $\bar{G}$ and all the other vertices in $P_m$ is adjacent either to $u$ or to $v$, since the armed crown contains no complete graph other than $K_2$.  There will be no independent set with cardinality greater than or equal to 3.  There fore the independence number of  $\bar{G}$ is 2.  By theorem \ref{th1}, $\beta(\bar{G})=mn-2$.
\\
Therefore For an armed crown graph $G$ with a path $P_m$ and a cycle $C_n$, 
$\beta(G)+\beta(\bar{G})=\frac{mn}{2}+(mn-2)=\frac{3mn-4}{2}$ and $\beta(G).\beta(\bar{G})=\frac{mn}{2}.(mn-2)=\frac{mn(mn-2)}{2}$.
\end{proof}
\section{Conclusion}{\rm
The theoretical results obtained in this research may provide a better insight into the problems involving matching number and independence number by improving the known lower and upper bounds on sums and products of independence numbers of a graph $G$ and an associated graph of $G$.  More properties and characteristics of operations on independence number and also other graph parameters are yet to be investigated.  The problems of establishing the inequalities on sums and products of independence numbers for various graphs and graph classes still remain unsettled. All these facts highlight a wide scope for further studies in this area.
\\\\This work is motivated by the inspiring talk given by Dr. J Paulraj Joseph, Department of Mathematics, Manonmaniam Sundaranar University,  TamilNadu, India titled \textbf{Bounds on sum of graph parameters - A survey}, at the National Conference on Emerging Trends in Graph Connections (NCETGC-2014), University of Kerala,  Kerala, India.}

\end{document}